\documentclass{amsart}

\usepackage{amssymb,amsmath,amsthm}
\usepackage{graphicx}
\usepackage{enumitem}
\usepackage{booktabs}
\usepackage{array}
\usepackage{caption}
\usepackage{float}
\usepackage{xcolor}
\usepackage{tcolorbox}
\usepackage{hyperref}

\definecolor{mainblue}{RGB}{15,45,90}
\definecolor{accent}{RGB}{170,30,45}

\hypersetup{
    colorlinks=true,
    linkcolor=mainblue,
    urlcolor=accent,
    citecolor=mainblue
}

\newtheorem{theorem}{Theorem}[section]
\newtheorem{lemma}[theorem]{Lemma}
\newtheorem{proposition}[theorem]{Proposition}
\newtheorem{corollary}[theorem]{Corollary}

\theoremstyle{definition}
\newtheorem{definition}[theorem]{Definition}
\newtheorem{example}[theorem]{Example}

\theoremstyle{remark}
\newtheorem{remark}[theorem]{Remark}

\numberwithin{equation}{section}

\begin{document}

\title[Finitely Related Clones]{Finitely Related Clones: Action Algebras and Applications to Free Algebras}

\author{Vishwesh Tiwari}
\address{Department of Mathematics, Indian Institute of Science Education and Research (IISER) Pune, Maharashtra, India}
\email{vishwesh.tiwari@students.iiserpune.ac.in}

\subjclass[2010]{Primary 08A40, 08A70, 68Q25; Secondary 03B50}

\keywords{Constraint Satisfaction Problems, Polymorphisms, Clones, Finitely Related Clones, Action Algebras, Free Algebras}

\date{}

\begin{abstract}
Constraint Satisfaction Problems (CSPs) provide a framework for expressing complex algorithmic decision problems. For finite-domain CSPs, the polymorphism clone provides a fundamental algebraic invariant governing the associated constraint language. A fundamental structural question in universal algebra is how to determine whether a given clone is finitely related. In this paper, we prove that a clone $\mathcal{C}$ on a finite domain $C$ is finitely related if and only if its action algebra $\widetilde{\mathcal{C}} = \mathcal{C} \curvearrowright \mathcal{C}^{(n)}$ on its $n$-ary part (for $n \ge |C|$) is finitely related. We then apply natural clone isomorphisms to demonstrate that a finite algebra is finitely related if and only if its free algebra of sufficient rank is finitely related.
\end{abstract}

\maketitle

\section{Introduction}

The algebraic approach to Constraint Satisfaction Problems (CSPs) translates algorithmic decision problems into the study of algebraic structures. For finite-domain CSPs, the polymorphism clone provides a fundamental algebraic invariant governing the associated constraint language. Foundational results by Geiger \cite{geiger1968} and Bodnarchuk et al. \cite{bodnarchuk1969} established that the expressive power of a constraint language is governed entirely by the mathematical operations, called \emph{polymorphisms}, that preserve its relations. This framework was famously used to give a modern, algebraic proof of Schaefer's Dichotomy Theorem for Boolean domains \cite{schaefer1978}, and ultimately laid the groundwork for the resolution of the general CSP Dichotomy Conjecture \cite{federvardi, bulatov2017, zhuk2017}.

Once the complexity of a CSP is tied to the algebraic structure that its polymorphisms form—known as a \emph{clone}—studying the structural properties of clones becomes essential. While classical clone theory has deeply explored Boolean domains \cite{post1941}, a recurring difficulty in general universal algebra arises from the infinite nature of clones.

A fundamental problem in this domain is determining whether a given clone is \emph{finitely related} (i.e., whether it can be completely determined by its preservation of a finite set of relations). Because a clone contains operations of arbitrarily high arities, this question is naturally infinite in nature.

\textbf{Contributions.} The primary contribution of this paper is the reduction of this infinite structural problem into a finite operational setting using the machinery of clone actions and induced algebras. Specifically, we define the \emph{action algebra} $\widetilde{\mathcal{C}} = \mathcal{C} \curvearrowright \mathcal{C}^{(n)}$, where the clone acts upon its own $n$-ary part. Our main theorem establishes that a clone $\mathcal{C}$ on a finite domain $C$ is finitely related if and only if its action algebra $\widetilde{\mathcal{C}}$ is finitely related for $n \ge |C|$. This reduces the question of finite relatedness of the clone to the finite action algebra $\mathcal{C} \curvearrowright \mathcal{C}^{(n)}$. The foundational idea behind this reduction was proposed by Michael Pinsker; building upon his suggestion, we provide the detailed algebraic formulation and proofs.

Furthermore, as an application of our clone isomorphism corollary, we recover a known finite-relatedness criterion for free algebras \cite{markovic}. Using natural clone isomorphisms, we demonstrate that a finite algebra $\mathbf{A}$ is finitely related if and only if its corresponding free algebra $\mathbf{F}_n(\mathcal{V}(\mathbf{A}))$ (of sufficient rank) is finitely related.

\textbf{Organization.} The paper is organized as follows. Section 2 introduces the necessary preliminaries on relational structures, polymorphisms, and primitive positive definability. Section 3 develops the theory of finitely related clones, proves the main characterization via action algebras, and derives the application to free algebras.

\section{Preliminaries: Relational Structures and Polymorphisms}

We begin by formally defining the relationship between relational structures, constraint satisfaction, and algebraic operations.

\begin{definition}[Relational Structure]
A \emph{relational structure} is a tuple $\mathcal{A} = (A; R_1, \dots, R_k)$, where $A$ is a non-empty set called the \emph{domain}, and each $R_i \subseteq A^{n_i}$ is an $n_i$-ary relation on $A$.
\end{definition}

\begin{definition}[Constraint Satisfaction Problem, $\text{CSP}(\mathcal{A})$]
Let $\mathcal{A} = (A; R_1, \dots, R_k)$ be a fixed finite relational structure. The constraint satisfaction problem parameterized by $\mathcal{A}$, denoted $\text{CSP}(\mathcal{A})$, takes as input a set of variables $V$ and a set of constraints $(s_j, R_{i_j})$ where $s_j$ is a tuple of variables. The problem asks whether there exists an assignment $h: V \to A$ such that every constraint is satisfied (i.e., $h(s_j) \in R_{i_j}$).
\end{definition}

To bridge CSPs with universal algebra, we introduce polymorphisms.

\begin{definition}[Polymorphism and Invariance]
Let $R \subseteq A^n$ be an $n$-ary relation. A $k$-ary operation $f: A^k \rightarrow A$ is said to \emph{preserve} $R$ (or is a \emph{polymorphism} of $R$) if, whenever we take $k$ tuples $r^1, \dots, r^k \in R$, applying $f$ coordinatewise yields a new tuple that also belongs to $R$. That is:
\[
f(r^1, \dots, r^k) \in R.
\]
For a relational structure $\mathcal{A}$, the set of all operations that preserve every relation in $\mathcal{A}$ is denoted by $\text{Pol}(\mathcal{A})$. Conversely, for a set of operations $F$ on $A$, the set of all relations preserved by every operation in $F$ is denoted by $\text{Inv}(F)$.
\end{definition}

New relations can be logically constructed from existing ones using a restricted subset of first-order logic.

\begin{definition}[Primitive Positive Definability]
A relation $R \subseteq A^n$ is \emph{primitively positively definable} (or \emph{pp-definable}) from a relational structure $\mathcal{A}$ if $R$ can be defined using a first-order formula over $\mathcal{A}$ restricted to existential quantifiers ($\exists$), conjunctions ($\land$), equality ($=$), and the always false formula ($\bot$). The set of all such relations is denoted $\text{PP}(\mathcal{A})$.
\end{definition}

If a constraint language can be pp-defined from an existing one, any problem in the new language can be efficiently translated into a problem in the base language \cite{jeavons1998}. The fundamental Galois connection linking relational structures to algebraic operations states that the relations logically constructible from $\mathcal{A}$ using pp-formulas are exactly the relations preserved by all polymorphisms of $\mathcal{A}$.

\begin{theorem}[Geiger \cite{geiger1968}, Bodnarchuk et al. \cite{bodnarchuk1969}]
\label{thm:geiger}
Let $\mathcal{A}$ be a finite relational structure. Then:
\[
\text{PP}(\mathcal{A}) = \text{Inv}(\text{Pol}(\mathcal{A})).
\]
\end{theorem}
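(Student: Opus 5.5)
The statement to prove is Theorem~\ref{thm:geiger}: $\text{PP}(\mathcal{A}) = \text{Inv}(\text{Pol}(\mathcal{A}))$ for a finite relational structure $\mathcal{A}$. I will show the two inclusions separately.

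\textbf{Easy inclusion $\text{PP}(\mathcal{A}) \subseteq \text{Inv}(\text{Pol}(\mathcal{A}))$.} I will argue by induction on the structure of a pp-formula. Fix $f \in \text{Pol}(\mathcal{A})$ of arity $k$. The base cases hold directly:
\begin{itemize}
\item the basic relations $R_i$ are preserved by definition;
\item equality is preserved because applying $f$ coordinatewise to tuples with equal entries in two positions gives equal entries there;
\item the empty relation ($\bot$) is vacuously preserved.
\end{itemize}
For a conjunction, if $k$ tuples satisfy both conjuncts, then so does their image under $f$. For an existential quantifier, suppose $k$ tuples satisfy $\exists y\,\phi$. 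Choose witnesses $b_1,\dots,b_k$ for them. Then $f(b_1,\dots,b_k)$ is a witness for the image tuple, by the inductive hypothesis applied to $\phi$.

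\textbf{Hard inclusion $\text{Inv}(\text{Pol}(\mathcal{A})) \subseteq \text{PP}(\mathcal{A})$.} Let $R \subseteq A^m$ be preserved by every polymorphism. If $R$ is empty it is defined by $\bot$, so assume $R = \{r^1,\dots,r^k\}$ with $k \ge 1$; this is finite since $A$ is finite.

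Consider the $k$-fold power $\mathcal{A}^k$ and form its canonical conjunctive query, the ``indicator'' formula. Take one variable $x_a$ for each $a \in A^k$, where $a$ ranges over the columns of possible $k$-tuples. For every relation $R_i$ and every tuple $(a^1,\dots,a^{n_i})$ of elements of $A^k$ with $(a^1_j,\dots,a^{n_i}_j) \in R_i$ for all $j \le k$, add the conjunct $R_i(x_{a^1},\dots,x_{a^{n_i}})$. This formula is finite because $A$ is finite, and its satisfying assignments $A^{A^k}\ni h$ are exactly the $k$-ary polymorphisms of $\mathcal{A}$, viewed as maps $A^k \to A$.

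Next, for $\ell = 1,\dots,m$ let $c_\ell = (r^1_\ell,\dots,r^k_\ell) \in A^k$ be the $\ell$-th column of the matrix whose rows are the tuples of $R$. Existentially quantify all variables except $x_{c_1},\dots,x_{c_m}$. If two columns coincide, reuse the same variable, which amounts to adding an equality conjunct. Call the resulting pp-defined relation $S$.

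I claim $S = R$.
\begin{itemize}
\item \emph{$R \subseteq S$.} Each projection $\pi_j$ is a polymorphism, and $(\pi_j(c_1),\dots,\pi_j(c_m)) = r^j$, so every $r^j$ lies in $S$.
\item \emph{$S \subseteq R$.} Any tuple in $S$ has the form $(f(c_1),\dots,f(c_m))$ for some $k$-ary polymorphism $f$. This tuple equals $f(r^1,\dots,r^k)$ applied coordinatewise, which lies in $R$ because $R \in \text{Inv}(\text{Pol}(\mathcal{A}))$.
\end{itemize}

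The main obstacle is the bookkeeping in the hard inclusion: verifying that the satisfying assignments of the indicator formula coincide exactly with the polymorphisms of $\mathcal{A}$, and handling repeated columns correctly. Finiteness of $A$ is used essentially in two places, to make $R$ a finite set of tuples and to make the indicator formula finite.
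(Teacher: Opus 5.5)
The paper gives no proof of this theorem; it is quoted from Geiger and Bodnarchuk et al., so there is nothing internal to compare against. Your argument is correct and complete, and it is the standard proof.

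The easy inclusion by induction on pp-formulas is fine. In the hard inclusion, the indicator formula of $\mathcal{A}^k$ with $k=|R|$ defines exactly $\text{Pol}(\mathcal{A})^{(k)}$. Its projection onto the columns $c_1,\dots,c_m$ is therefore $\{f(r^1,\dots,r^k) : f \in \text{Pol}(\mathcal{A})^{(k)}\}$, and this equals $R$ by your projection and invariance arguments. Repeated columns are correctly handled by equality. One small addition: finiteness of the indicator formula also uses that $\mathcal{A}$ has only finitely many relations, which is built into the paper's definition of a relational structure.

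The second half of your argument shows that an invariant relation with $k$ tuples is a coordinate projection of the value-table relation of the $k$-ary polymorphisms. That is exactly the content of the quantitative version, Theorem~\ref{thm:geiger_quantitative}, which the paper also only cites. To get arities larger than $|R|$ there, list the tuples of $R$ with repetitions.
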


Consequently, the computational complexity of $\text{CSP}(\mathcal{A})$ is completely determined by the algebraic properties of $\text{Pol}(\mathcal{A})$. The sets of polymorphisms defined above inherently form an algebraic structure known as a clone.

\begin{definition}[Clone of Operations]
\label{def:clone}
Let $A$ be a set. A \emph{clone} on $A$ is a set $\mathcal{C}$ of finitary operations on $A$ that satisfies two fundamental properties:
\begin{enumerate}
    \item \textit{Contains Projections:} For every arity $n \ge 1$ and index $1 \le i \le n$, the $n$-ary projection operation $\pi_i^n(x_1, \dots, x_n) = x_i$ belongs to $\mathcal{C}$.
    \item \textit{Closed under Composition:} If $f \in \mathcal{C}$ is an $m$-ary operation, and $g_1, \dots, g_m \in \mathcal{C}$ are $n$-ary operations, their generalized composition
    \[
    h(x_1, \dots, x_n) = f(g_1(x_1, \dots, x_n), \dots, g_m(x_1, \dots, x_n))
    \]
    also belongs to $\mathcal{C}$.
\end{enumerate}
\end{definition}

\section{Finitely Related Clones}

In this section, we focus on finitely related clones. The main goal of our work here is to build up to and prove a central theorem: a clone is finitely related if and only if its action on its $n$-ary part is also finitely related (for $n \ge |C|$). The foundational idea behind the main proof in this section was proposed by Michael Pinsker; building upon his suggestion, we have fully worked out the detailed algebraic proofs presented here.

\begin{definition}[$n$-ary Part of a Clone]
The \emph{$n$-ary part} of a clone $\mathcal{C}$ is defined as the set of all $n$-ary operations belonging to $\mathcal{C}$:
\[
\mathcal{C}^{(n)} = \{ f \in \mathcal{C} \mid \text{arity}(f) = n \}.
\]
\end{definition}

In particular, $\mathcal{C}^{(1)}$ represents the set of all unary operations in the clone, $\mathcal{C}^{(2)}$ represents the binary operations, and so forth.

\begin{example}
\label{ex:nary_part}
Suppose $C = \{0,1\}$ and $\mathcal{C}$ is the Boolean clone generated by the logical AND operation $\wedge$.
\begin{enumerate}
    \item $\mathcal{C}^{(1)} = \{\pi_1^1\}$ contains only the unary projection.
    \item $\mathcal{C}^{(2)} = \{\pi_1^2, \pi_2^2, \wedge\}$ contains the two binary projections and the meet operation itself.
\end{enumerate}
\end{example}

\subsection{Characterizing Finitely Related Clones}

We now introduce the concept of a finitely related clone.

\begin{definition}[Finitely Related Clone]
\label{def:finitely_related_clone}
A clone $\mathcal{C}$ is \emph{finitely related} if $\mathcal{C} = \text{Pol}(R_1, \dots, R_k)$ for finitely many relations $R_1, \dots, R_k$.
\end{definition}

\begin{proposition}
\label{prop:single_relation}
On a finite domain, a clone $\mathcal{C}$ is finitely related if and only if there exists a single relation $R$ such that $\mathcal{C} = \text{Pol}(R)$.
\end{proposition}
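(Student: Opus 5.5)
The reverse direction is immediate: if $\mathcal{C} = \text{Pol}(R)$ for a single relation $R$, then $\mathcal{C}$ is already of the form required by Definition~\ref{def:finitely_related_clone} with $k=1$. All the work is in the forward direction. Given $\mathcal{C} = \text{Pol}(R_1,\dots,R_k)$ with $R_i \subseteq C^{n_i}$, I will build one relation $R$ such that $\text{Pol}(R) = \text{Pol}(R_1,\dots,R_k)$. The natural choice is the Cartesian product
\[
R = R_1 \times R_2 \times \dots \times R_k \subseteq C^{n_1+\dots+n_k}.
\]

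\textbf{Degenerate case.} If some $R_i$ is empty, then $R$ is empty. But every operation preserves the empty relation, and also preserves each $R_j$ that is empty. So I first discard all empty $R_i$: this does not change $\text{Pol}(R_1,\dots,R_k)$. If after discarding nothing remains, then $\mathcal{C}$ is the clone of all operations, and I take $R = \emptyset$ (or the equality relation), which every operation preserves. From now on I assume every $R_i$ is nonempty.

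\textbf{Inclusion $\text{Pol}(R_1,\dots,R_k) \subseteq \text{Pol}(R)$.} Take $f$ that preserves every $R_i$, and apply it coordinatewise to tuples $r^1,\dots,r^m \in R$. Each $r^j$ splits into blocks lying in $R_1,\dots,R_k$. Applying $f$ blockwise lands in each $R_i$, so the concatenated result lies in $R$.

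\textbf{Inclusion $\text{Pol}(R) \subseteq \text{Pol}(R_1,\dots,R_k)$.} This is the step that uses nonemptiness. Take $f \in \text{Pol}(R)$ of arity $m$ and tuples $s^1,\dots,s^m \in R_i$. Fix one element $t_j \in R_j$ for each $j \neq i$. Pad each $s^\ell$ to a tuple of $R$ by placing $s^\ell$ in block $i$ and $t_j$ in every other block $j$. Applying $f$ gives a tuple of $R$, and its $i$-th block is exactly $f(s^1,\dots,s^m)$, which therefore lies in $R_i$.

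Equivalently, $R_i$ is pp-definable from $R$ as a projection of $R$ (existentially quantify the other blocks), so one could invoke Theorem~\ref{thm:geiger}, but the direct argument is simpler. Finiteness of the domain is not actually needed beyond matching the statement. The only real subtlety is the empty-relation issue in the degenerate case, which I expect to be the main thing to handle carefully.
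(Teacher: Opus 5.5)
Your proposal is correct and follows the paper's proof: discard the empty relations, take the Cartesian product $R = R_1 \times \dots \times R_k$, and show $\text{Pol}(R) = \text{Pol}(R_1,\dots,R_k)$. Your version is more careful than the paper's in three places: you spell out the padding argument (fixing $t_j \in R_j$ for $j \neq i$), which is where nonemptiness is actually used; you state the trivial reverse direction; and you treat the case where every $R_i$ is empty.
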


\begin{proof}
Let $\mathcal{C} = \text{Pol}(R_1, \dots, R_k)$ be finitely related. We may assume without loss of generality that each $R_i$ is non-empty, since an empty relation imposes no restriction on polymorphisms and may therefore be omitted from the generating set. Let the arities of $R_1, \dots, R_k$ be $m_1, \dots, m_k$ respectively. We construct a single, combined relation $R$ by taking their Cartesian product:
\[
R = R_1 \times R_2 \times \dots \times R_k.
\]
The new relation $R$ has arity $m = m_1 + \dots + m_k$. By definition, a tuple belongs to $R$ if and only if its first $m_1$ coordinates form a tuple in $R_1$, the next $m_2$ coordinates form a tuple in $R_2$, and so on.

When we apply any operation $f$ coordinatewise to tuples in $R$, the operation acts completely independently on each of these coordinate blocks. Therefore, $f$ will output a valid tuple in $R$ if and only if it simultaneously outputs a valid tuple in every individual block $R_i$. 

This means $f$ preserves the combined relation $R$ if and only if it preserves every individual relation $R_i$. Thus, $\text{Pol}(R_1, \dots, R_k) = \text{Pol}(R)$.
\end{proof}

We now connect the property of being finitely related to relational constructions. To do this, we must first explain how a set of operations can be formally treated as a relation.

Originally, we defined the $m$-ary part of our clone, $\mathcal{C}^{(m)}$, as a set of $m$-ary operations. However, because the base set $C$ is finite, we can convert these operations into tuples. Let $N = |C|^m$. We can systematically enumerate all possible $m$-tuples in the domain as $t_1, \dots, t_N \in C^m$. Every $m$-ary operation $f: C^m \to C$ can then be uniquely identified with its value table:
\[
\big(f(t_1), \dots, f(t_N)\big) \in C^N.
\]
Consequently, every $m$-ary operation becomes a single $N$-tuple. This allows us to view the entire set of operations $\mathcal{C}^{(m)}$ as a subset of $C^N$:
\[
\mathcal{C}^{(m)} \subseteq C^N.
\]
That is, $\mathcal{C}^{(m)}$ is now viewed as a finite relation of arity $N$. Therefore, the notation $\text{Pol}(\mathcal{C}^{(m)})$ makes rigorous sense: it denotes the clone of all operations preserving this newly constructed relation.

\begin{example}
\label{ex:c_m_as_relation}
Consider the case where $m = 2$ and $C = \{0, 1\}$. The set of binary inputs $C^2$ consists of the following 4 tuples:
\[
C^2 = \{(0,0), (0,1), (1,0), (1,1)\}.
\]
Let $f: C^2 \rightarrow C$ be defined as the meet operation $f(x,y) = x \wedge y$. It takes the values $0, 0, 0, 1$ respectively on these four enumerated inputs. Thus, $f$ is completely described by the tuple:
\[
(0, 0, 0, 1) \in C^4.
\]
We can therefore view this operation as a single point in $C^4$. 

Now, suppose the binary part of our clone is $\mathcal{C}^{(2)} = \{\pi_1, \pi_2, \wedge\}$. After replacing each operation with its corresponding value table tuple, we obtain:
\[
\mathcal{C}^{(2)} = \{(0,0,1,1), (0,1,0,1), (0,0,0,1)\}.
\]
This set is simply a subset of $C^4$, which is a $4$-ary relation on $C$. In this manner, $\mathcal{C}^{(m)}$ becomes a relation, allowing us to consider its polymorphism clone $\text{Pol}\left(\mathcal{C}^{(m)}\right)$.
\end{example}

With this understanding, we can apply the foundational results of universal algebra. We restate a specialized, quantitative version of Theorem~\ref{thm:geiger}. We restate it here because this specific formulation bounds the arity; it tells us exactly \emph{which} finite relation ($\mathcal{C}^{(m)}$) we need to use as our base to pp-define any other relation $R$, provided $m$ is large enough.

\begin{theorem}[Bodnarchuk et al. \cite{bodnarchuk1969}, Geiger \cite{geiger1968}]
\label{thm:geiger_quantitative}
Let $C$ be a finite set, $R \subseteq C^r$ be a relation, and $\mathcal{C} = \text{Pol}(R)$. If we choose an arity $m \ge |R|$, then $R$ is primitive positively definable from the relation $\mathcal{C}^{(m)}$, denoted by:
\[
R \in \text{PP}\left(\mathcal{C}^{(m)}\right).
\]
\end{theorem}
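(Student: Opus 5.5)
The plan is to use the standard ``free structure'' argument. Let $m \ge |R|$ and list the tuples of $R$ as $r^1, \dots, r^k$, where $k = |R| \le m$; if $R$ is empty, then $R$ is defined by $\bot$ and there is nothing to prove. If $k < m$, pad the list by repeating $r^k$ so that we have exactly $m$ tuples $r^1, \dots, r^m \in R$. Form the $r \times m$ matrix whose columns are these tuples. Its rows $a_1, \dots, a_r \in C^m$ are $m$-tuples, i.e.\ elements of the index set $\{t_1, \dots, t_N\}$ of the value tables, with $N = |C|^m$. The key observation is that evaluating an $m$-ary operation $f$ coordinatewise on $r^1, \dots, r^m$ gives exactly $(f(a_1), \dots, f(a_r))$. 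Each $a_i$ equals some $t_{j(i)}$, so this output is the projection of the value table of $f$ onto the coordinates $j(1), \dots, j(r)$.

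Next I define the candidate pp-formula:
\[
\phi(x_1, \dots, x_r) \;=\; \exists (y_1, \dots, y_N)\, \Big( \mathcal{C}^{(m)}(y_1, \dots, y_N) \wedge \bigwedge_{i=1}^r x_i = y_{j(i)} \Big).
\]
By construction, $\phi$ defines the set $\{ (f(a_1), \dots, f(a_r)) : f \in \mathcal{C}^{(m)} \}$. It remains to check two inclusions.

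\textbf{Inclusion $\phi \subseteq R$.} For $f \in \mathcal{C}^{(m)} = \text{Pol}(R)^{(m)}$, the tuple $f(r^1, \dots, r^m)$ lies in $R$, because $f$ preserves $R$.

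\textbf{Inclusion $R \subseteq \phi$.} For each $\ell \le k$, the projection $\pi^m_\ell$ belongs to the clone. It outputs $(a_1(\ell), \dots, a_r(\ell)) = r^\ell$, so every tuple of $R$ is realized. Repeated columns cause no problem: a repeated column is just another copy of some $r^\ell$, and projecting onto it yields a tuple already in $R$.

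The main subtlety is bookkeeping rather than mathematics. Distinct rows $a_i$ may coincide, so the map $i \mapsto j(i)$ need not be injective; equality atoms handle this correctly. One must also make sure the padding is consistent, which is why the hypothesis $m \ge |R|$ is needed: it guarantees that every tuple of $R$ appears as a column and hence is hit by some projection. I would also remark that the argument never uses finiteness of $C$ beyond making $\mathcal{C}^{(m)}$ a finitary relation of arity $N$.
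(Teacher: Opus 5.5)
Your proof is correct and complete. You identify $R$ as the projection of the relation $\mathcal{C}^{(m)}$ onto the coordinates indexed by the rows $a_1,\dots,a_r$ of the matrix whose columns list the tuples of $R$, repeated as needed to fill $m$ columns. Preservation of $R$ then gives $\phi \subseteq R$, and the projections $\pi^m_\ell$ give $R \subseteq \phi$.

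The paper does not prove this statement itself; it quotes it from Geiger and Bodnarchuk et al. Your argument is the standard one behind that citation. Your treatment of $R=\emptyset$ via $\bot$ is also fine, since the paper's definition of pp-definability allows it.
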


\begin{theorem}
\label{thm:finitely_related_nary}
A clone $\mathcal{C}$ on a finite set $C$ is finitely related if and only if there exists an arity $m_0 \ge 1$ such that for all $m \ge m_0$:
\[
\mathcal{C} = \text{Pol}\left(\mathcal{C}^{(m)}\right).
\]
\end{theorem}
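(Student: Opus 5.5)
The reverse implication is immediate from the definitions. If $\mathcal{C} = \text{Pol}(\mathcal{C}^{(m)})$ for some (indeed, for all) $m \ge m_0$, then fixing one such $m$ exhibits $\mathcal{C}$ as the polymorphism clone of a single relation. That relation is $\mathcal{C}^{(m)} \subseteq C^{N}$ with $N = |C|^m$, and it is finite because $C$ is finite. By Definition~\ref{def:finitely_related_clone}, $\mathcal{C}$ is therefore finitely related.

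For the forward implication, I will first use Proposition~\ref{prop:single_relation} to write $\mathcal{C} = \text{Pol}(R)$ for a single relation $R \subseteq C^r$. I then set $m_0 = \max(|R|, 1)$. Fix any $m \ge m_0$; the goal is to show $\mathcal{C} = \text{Pol}(\mathcal{C}^{(m)})$, which I will do by proving the two inclusions separately.

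\emph{The inclusion $\mathcal{C} \subseteq \text{Pol}(\mathcal{C}^{(m)})$.} This is the clone-closure step. Let $f \in \mathcal{C}$ be $k$-ary and let $g_1, \dots, g_k \in \mathcal{C}^{(m)}$, viewed as tuples in $C^N$. Applying $f$ coordinatewise to these $N$-tuples gives the value table of the composition $f(g_1, \dots, g_k)$. Concretely, the entry indexed by $t_j$ is $f(g_1(t_j), \dots, g_k(t_j))$. This composition is $m$-ary and lies in $\mathcal{C}$ by closure under composition (Definition~\ref{def:clone}). Hence the resulting tuple lies in $\mathcal{C}^{(m)}$, so $f$ preserves $\mathcal{C}^{(m)}$.

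\emph{The inclusion $\text{Pol}(\mathcal{C}^{(m)}) \subseteq \mathcal{C}$.} Since $m \ge |R|$, Theorem~\ref{thm:geiger_quantitative} gives $R \in \text{PP}(\mathcal{C}^{(m)})$. I then invoke the standard fact that polymorphisms of a relation preserve every relation pp-definable from it; this is the easy direction $\text{PP}(\mathcal{A}) \subseteq \text{Inv}(\text{Pol}(\mathcal{A}))$ of Theorem~\ref{thm:geiger}. It follows that every $f \in \text{Pol}(\mathcal{C}^{(m)})$ preserves $R$, so $f \in \text{Pol}(R) = \mathcal{C}$. Combining the two inclusions gives $\mathcal{C} = \text{Pol}(\mathcal{C}^{(m)})$ for every $m \ge m_0$.

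The step needing the most care is the choice of $m_0$ and the edge case where $R$ is empty. If $R = \emptyset$, then $\text{Pol}(R)$ is the clone of all operations on $C$. In that case $\mathcal{C}^{(m)} = C^N$ is the full relation, and $\text{Pol}(C^N)$ is again all operations, so the equality holds for every $m \ge 1$. The other point to verify is that Theorem~\ref{thm:geiger_quantitative} applies for \emph{every} $m \ge |R|$, not only for $m = |R|$. Its statement says "any $m \ge |R|$", so I will apply it directly and make no monotonicity argument in $m$.
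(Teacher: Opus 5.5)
Your proposal is correct and follows the paper's proof step for step. The reverse direction fixes one $m$ and notes that $\mathcal{C}^{(m)}$ is a finite relation. The forward direction uses Proposition~\ref{prop:single_relation}, then closure under composition for $\mathcal{C} \subseteq \text{Pol}(\mathcal{C}^{(m)})$, then Theorem~\ref{thm:geiger_quantitative} together with preservation of pp-definable relations for the other inclusion. Your choice $m_0 = \max(|R|,1)$ and the separate treatment of $R=\emptyset$ are harmless extra care; the paper avoids the empty case by assuming the relations in Proposition~\ref{prop:single_relation} are non-empty.
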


\begin{proof}
$(\implies)$ Assume that the clone $\mathcal{C}$ is finitely related. By Proposition~\ref{prop:single_relation}, there exists a single finite relation $R \subseteq C^r$ such that $\mathcal{C} = \text{Pol}(R)$. Let us set $m_0 := |R|$. We claim that for every arity $m \ge m_0$, we have $\mathcal{C} = \text{Pol}\left(\mathcal{C}^{(m)}\right)$. We prove this equality by demonstrating inclusion in both directions.

To show that $\mathcal{C} \subseteq \text{Pol}\left(\mathcal{C}^{(m)}\right)$, take an arbitrary $k$-ary operation $f \in \mathcal{C}$. We must verify that $f$ preserves the relation $\mathcal{C}^{(m)}$. Recall that elements of this relation are exactly the value tables of $m$-ary operations in $\mathcal{C}$. Take $k$ arbitrary tuples from the relation $\mathcal{C}^{(m)}$. These tuples correspond to the value tables of some operations $g_1, \dots, g_k \in \mathcal{C}^{(m)}$ evaluated over the enumeration of the domain $t_1, \dots, t_N$:
\[
\big(g_i(t_1), \dots, g_i(t_N)\big) \quad \text{for } i \in \{1, \dots, k\}.
\]
When we apply $f$ coordinatewise to these $k$ tuples, the $j$-th coordinate of the resulting tuple is:
\[
f\big(g_1(t_j), \dots, g_k(t_j)\big).
\]
This resulting tuple is precisely the value table of the composite operation 
\[
h(x_1, \dots, x_m) = f(g_1(x_1, \dots, x_m), \dots, g_k(x_1, \dots, x_m)).
\]
Because $\mathcal{C}$ is a clone, it is closed under composition, meaning $h \in \mathcal{C}$. Furthermore, since each $g_i$ is $m$-ary, the composition $h$ is also $m$-ary, so $h \in \mathcal{C}^{(m)}$. Thus, the resulting tuple belongs to the relation $\mathcal{C}^{(m)}$, proving that $f$ preserves the relation. Hence, $\mathcal{C} \subseteq \text{Pol}\left(\mathcal{C}^{(m)}\right)$.

For the reverse inclusion, $\text{Pol}\left(\mathcal{C}^{(m)}\right) \subseteq \mathcal{C}$, recall that $m \ge m_0 = |R|$. By Theorem~\ref{thm:geiger_quantitative}, the relation $R$ is primitive positively definable from $\mathcal{C}^{(m)}$, meaning $R \in \text{PP}\left(\mathcal{C}^{(m)}\right)$. Now, take an arbitrary operation $f \in \text{Pol}\left(\mathcal{C}^{(m)}\right)$. Because primitive positively definable relations are inherently preserved by every polymorphism of their defining relations, $f$ must also preserve $R$. This implies $f \in \text{Pol}(R)$. Since our initial assumption states that $\text{Pol}(R) = \mathcal{C}$, we directly obtain $f \in \mathcal{C}$. This establishes $\text{Pol}\left(\mathcal{C}^{(m)}\right) \subseteq \mathcal{C}$.

Combining both directions, we conclude that $\mathcal{C} = \text{Pol}\left(\mathcal{C}^{(m)}\right)$ for all $m \ge m_0$.

$(\impliedby)$ Suppose there exists an arity $m_0 \ge 1$ such that $\mathcal{C} = \text{Pol}\left(\mathcal{C}^{(m)}\right)$ for all $m \ge m_0$. By fixing any such $m$, we have expressed $\mathcal{C}$ as the clone of polymorphisms of a specific relation, namely $\mathcal{C}^{(m)}$. Because the base set $C$ is finite, the relation $\mathcal{C}^{(m)} \subseteq C^{|C|^m}$ is a finite relation. By definition, a clone determined by a finite relation is finitely related, completing the proof.
\end{proof}

\subsection{Clone Actions and Induced Algebras}

Let $\mathcal{C}$ be a clone on a finite set $C$. Consider an operation $f \in \mathcal{C}$ of arity $k$. Recall that $\mathcal{C}^{(n)}$ denotes the $n$-ary part of the clone, defined by $\mathcal{C}^{(n)} = \{g : C^n \rightarrow C \mid g \in \mathcal{C}\}$. 

Since $\mathcal{C}$ is a clone, it is closed under composition. Therefore, for any $n$-ary operations $f_1, \dots, f_k \in \mathcal{C}^{(n)}$, the composition $f(f_1, \dots, f_k)$ is again an $n$-ary operation in $\mathcal{C}^{(n)}$. Consequently, $f$ acts directly on the set $\mathcal{C}^{(n)}$ by mapping these functions to a new function. From this construction, one can observe that every operation $f \in \mathcal{C}$ induces an operation on the set $\mathcal{C}^{(n)}$. 

\begin{definition}[Action Algebra]
\label{def:action_algebra}
In universal algebra, an \emph{algebra} is a pair consisting of a non-empty universe and a set of basic operations. Where $\mathcal{C}$ is a clone on $C$, we define the \emph{action algebra} $\widetilde{\mathcal{C}} := \mathcal{C} \curvearrowright \mathcal{C}^{(n)}$, whose universe is the set of $n$-ary operations $\mathcal{C}^{(n)}$, and whose operations are the actions induced by the elements of $\mathcal{C}$. 
\end{definition}

Before proving our main theorem regarding the finite relatedness of this action algebra, we must establish two structural properties. First, we examine the full clone of all operations on $C$, denoted $\mathcal{O}$, and its corresponding full action algebra $\widetilde{\mathcal{O}} := \mathcal{O} \curvearrowright \mathcal{O}^{(n)}$. The universe of this algebra is $A = \mathcal{O}^{(n)} = C^{C^n}$. Every $l$-ary operation $f \in \mathcal{O}$ induces an operation $\tilde{f} : A^l \rightarrow A$, where the composition is defined pointwise:
\[
\tilde{f}(g_1, \dots, g_l)(a_1, \dots, a_n) = f\big(g_1(a_1, \dots, a_n), \dots, g_l(a_1, \dots, a_n)\big).
\]

\begin{lemma}
\label{lem:equality_relation}
For any $n \ge 1$, the full action algebra $\widetilde{\mathcal{O}} = \mathcal{O} \curvearrowright \mathcal{O}^{(n)}$ is finitely related. In particular, $\widetilde{\mathcal{O}} = \text{Pol}(S)$ for a single finite relation $S$.
\end{lemma}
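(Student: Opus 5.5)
We identify the universe $A = C^{C^n}$ with the set of functions $C^n \to C$, so that $A \cong C^{N}$ with $N = |C|^n$. Under this identification, the induced operation $\tilde f$ is exactly $f$ applied coordinatewise on $A = C^N$; in other words $\widetilde{\mathcal{O}}$ is the $N$-th power of the full algebra $(C; \mathcal{O})$. The plan is to find a finite relation $S$ on $A$ whose polymorphisms are exactly the coordinatewise-acting operations. We take $S$ to be a single relation combining two ingredients, and then use Proposition~\ref{prop:single_relation} (product of the finitely many relations) to package them as one relation.

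\textbf{First ingredient: the coordinate kernels.} For each $j \le N$ consider the binary relation $E_j = \{(g,h) \in A^2 : g(t_j) = h(t_j)\}$, the kernel of the $j$-th projection $A \to C$. An operation $F : A^l \to A$ preserves every $E_j$ if and only if, for each $j$, the value $F(g_1,\dots,g_l)(t_j)$ depends only on $(g_1(t_j),\dots,g_l(t_j))$. Hence there exist functions $f_j : C^l \to C$ with
\[
F(g_1,\dots,g_l)(t_j) = f_j\big(g_1(t_j),\dots,g_l(t_j)\big).
\]
Operations of the form $\tilde f$ clearly preserve each $E_j$.

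\textbf{Second ingredient: forcing $f_j = f_{j'}$.} This is the step where care is needed. Consider the ternary-style relation
\[
T = \{(g,h) \in A^2 : g(t_j) = h(t_{j'}) \text{ for all } j,j'\ \ldots\}.
\]
That formulation is too strong, so instead we use, for each pair $j,j'$, the binary relation
\[
D_{j,j'} = \{(g,h) \in A^2 : g(t_j) = h(t_{j'})\}.
\]
Take $(u_i,v_i) \in D_{j,j'}$ with arbitrary common values $c_i = u_i(t_j) = v_i(t_{j'})$. Preservation of $D_{j,j'}$ gives $f_j(c_1,\dots,c_l) = f_{j'}(c_1,\dots,c_l)$, since every tuple $c$ is realized by some such pair. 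Each $\tilde f$ preserves $D_{j,j'}$, because it computes $f$ at every coordinate. Note also that $D_{j,j} = E_j$, so the first family is already included in the second.

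\textbf{Conclusion.} Let $S$ be the product of all $D_{j,j'}$ for $j,j' \le N$, as in Proposition~\ref{prop:single_relation}. Then $S$ is a finite relation of arity $2N^2$ on the finite set $A$. We have $\text{Pol}(S) = \bigcap_{j,j'} \text{Pol}(D_{j,j'})$, and by the two steps above this consists exactly of the operations $\tilde f$ with $f \in \mathcal{O}$, all $f_j$ being equal to a common $f$. The only subtle point is the nullary/empty issue, which does not arise: each $D_{j,j'}$ is nonempty, and the argument is uniform in $l \ge 1$. Therefore $\widetilde{\mathcal{O}} = \text{Pol}(S)$.
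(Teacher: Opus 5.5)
Your proof is correct and is essentially the paper's: your relations $D_{j,j'}$ are exactly the paper's $S_{t_j,t_{j'}}$, combined into a single relation via Proposition~\ref{prop:single_relation}, and the difference is only organizational (you first extract coordinate functions $f_j$ from the diagonal relations $D_{j,j}$ and then equate them using the off-diagonal ones, whereas the paper defines $f$ directly from a chosen witness and uses the same preservation property to get well-definedness and $F=\tilde f$). The abandoned relation $T$ in your second step is not needed and should simply be deleted.
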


\begin{proof}
Let $A = \mathcal{O}^{(n)} = C^{C^n}$ be the universe of the full action algebra $\widetilde{\mathcal{O}}$. For any pair of tuples $t, u \in C^n$, we define the binary relation $S_{t,u} \subseteq A^2$ by:
\[
S_{t,u} = \{ (g, h) \in A^2 \mid g(t) = h(u) \}.
\]
We claim that $\widetilde{\mathcal{O}} = \text{Pol}(\{S_{t,u} \mid t,u \in C^n\})$. Since $C^n$ is finite, there are finitely many such relations $S_{t,u}$. By Proposition~\ref{prop:single_relation}, these finitely many relations can be combined into a single finite relation $S$ such that $\widetilde{\mathcal{O}} = \text{Pol}(S)$.

We prove the claim by establishing inclusion in both directions.

\medskip
\noindent \textit{Direction 1: $\widetilde{\mathcal{O}} \subseteq \text{Pol}(\{S_{t,u} \mid t,u \in C^n\})$}

Take arbitrary tuples $t, u \in C^n$. Let $f \in \mathcal{O}$ be an $l$-ary operation on $C$, and let $\tilde{f} \in \widetilde{\mathcal{O}}$ be its induced $l$-ary action on $A$.

Suppose we take $l$ pairs from $S_{t,u}$, say $(g_1, g'_1), \dots, (g_l, g'_l) \in S_{t,u}$. By definition of $S_{t,u}$, this means that $g_i(t) = g'_i(u)$ for all $i \in \{1, \dots, l\}$.

Applying $\tilde{f}$ coordinatewise to these $l$ pairs yields the pair $(\tilde{f}(g_1, \dots, g_l), \tilde{f}(g'_1, \dots, g'_l))$. Evaluating both components:
\begin{align*}
\tilde{f}(g_1, \dots, g_l)(t) &= f\big(g_1(t), \dots, g_l(t)\big) \\
&= f\big(g'_1(u), \dots, g'_l(u)\big) \\
&= \tilde{f}(g'_1, \dots, g'_l)(u).
\end{align*}
Thus, $(\tilde{f}(g_1, \dots, g_l), \tilde{f}(g'_1, \dots, g'_l)) \in S_{t,u}$, proving that $\tilde{f}$ preserves $S_{t,u}$ for all $t,u \in C^n$. Hence, $\widetilde{\mathcal{O}} \subseteq \text{Pol}(\{S_{t,u} \mid t,u \in C^n\})$.

\medskip
\noindent \textit{Direction 2: $\text{Pol}(\{S_{t,u} \mid t,u \in C^n\}) \subseteq \widetilde{\mathcal{O}}$}

Let $F : A^l \to A$ be an $l$-ary operation on $A$ that preserves $S_{t,u}$ for all $t, u \in C^n$. We must show that $F = \tilde{f}$ for some $l$-ary operation $f \in \mathcal{O}$ on $C$.

For any vector $s = (s_1, \dots, s_l) \in C^l$, we pick a tuple $t^s \in C^n$ and $n$-ary operations $h_1^s, \dots, h_l^s \in \mathcal{O}^{(n)}$ such that:
\[
h_j^s(t^s) = s_j \quad \text{for all } j \in \{1, \dots, l\}.
\]
(Such operations $h_j^s$ always exist; for instance, one can simply choose $h_j^s$ to be the constant function returning $s_j$).

We then define the function $f : C^l \to C$ by setting:
\[
f(s) := F\big(h_1^s, \dots, h_l^s\big)(t^s).
\]

\textit{Well-definedness of $f$:} Suppose we make a different choice of evaluation tuple $u^s \in C^n$ and operations $k_1^s, \dots, k_l^s \in \mathcal{O}^{(n)}$ satisfying $k_j^s(u^s) = s_j$ for all $j$. Then for each $j \in \{1, \dots, l\}$, we have $h_j^s(t^s) = s_j = k_j^s(u^s)$, which implies $(h_j^s, k_j^s) \in S_{t^s, u^s}$.

Since $F$ preserves $S_{t^s, u^s}$, applying $F$ to these pairs yields:
\[
\Big(F(h_1^s, \dots, h_l^s), F(k_1^s, \dots, k_l^s)\Big) \in S_{t^s, u^s},
\]
which by definition means:
\[
F(h_1^s, \dots, h_l^s)(t^s) = F(k_1^s, \dots, k_l^s)(u^s).
\]
Therefore, the value of $f(s)$ is independent of the choice of $t^s$ and $h_j^s$, so $f : C^l \to C$ is a well-defined operation in $\mathcal{O}$.

\textit{Showing $F = \tilde{f}$:} Take arbitrary operations $g_1, \dots, g_l \in \mathcal{O}^{(n)}$ and an arbitrary tuple $t \in C^n$. Set $s = (g_1(t), \dots, g_l(t)) \in C^l$.

Since $g_j(t) = s_j$ for all $j$, the tuple of pairs $(g_j, h_j^s)$ belongs to $S_{t, t^s}$ for every $j \in \{1, \dots, l\}$. Because $F$ preserves $S_{t, t^s}$, we have:
\[
F(g_1, \dots, g_l)(t) = F(h_1^s, \dots, h_l^s)(t^s).
\]
By our definition of $f(s)$, the right-hand side equals $f(s) = f(g_1(t), \dots, g_l(t))$. Thus:
\[
F(g_1, \dots, g_l)(t) = f(g_1(t), \dots, g_l(t)) = \tilde{f}(g_1, \dots, g_l)(t).
\]
Since this holds for all $g_1, \dots, g_l \in \mathcal{O}^{(n)}$ and all $t \in C^n$, we conclude that $F = \tilde{f} \in \widetilde{\mathcal{O}}$.

\medskip
This proves $\text{Pol}(\{S_{t,u} \mid t,u \in C^n\}) = \widetilde{\mathcal{O}}$, completing the proof.
\end{proof}

\begin{lemma}
\label{lem:action_homomorphism}
The mapping $f \mapsto \tilde{f}$ from a clone to its action algebra acts as a homomorphism with respect to composition. Specifically, for any $k$-ary operation $f \in \mathcal{C}$ and any $m$-ary operations $g_1, \dots, g_k \in \mathcal{C}$, we have:
\[
\widetilde{f(g_1, \dots, g_k)} = \tilde{f}(\tilde{g}_1, \dots, \tilde{g}_k).
\]
\end{lemma}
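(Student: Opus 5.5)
The plan is to prove the identity by evaluating both sides on arbitrary arguments and checking that the results agree pointwise. Both sides are $m$-ary operations on $A = \mathcal{C}^{(n)}$. The left side is the action induced by the composite $h := f(g_1, \dots, g_k) \in \mathcal{C}^{(m)}$. The right side is the composition, inside the clone of operations on $A$, of the $k$-ary operation $\tilde{f}$ with the $m$-ary operations $\tilde{g}_1, \dots, \tilde{g}_k$. Two operations on $A$ are equal when they agree on every input $(\varphi_1, \dots, \varphi_m) \in A^m$. Two elements of $A$ are equal when they agree at every $a = (a_1, \dots, a_n) \in C^n$. So it suffices to fix $\varphi_1, \dots, \varphi_m \in \mathcal{C}^{(n)}$ and $a \in C^n$, and compare the values at $a$.

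First I unfold the left side using the defining formula for induced operations:
\[
\tilde{h}(\varphi_1, \dots, \varphi_m)(a) = h\big(\varphi_1(a), \dots, \varphi_m(a)\big) = f\Big(g_1\big(\varphi_1(a), \dots, \varphi_m(a)\big), \dots, g_k\big(\varphi_1(a), \dots, \varphi_m(a)\big)\Big).
\]
Next I unfold the right side. By the definition of composition of operations on $A$, it equals $\tilde{f}(\psi_1, \dots, \psi_k)(a)$ with $\psi_i := \tilde{g}_i(\varphi_1, \dots, \varphi_m) \in A$. Applying the defining formula for $\tilde{f}$ gives $f(\psi_1(a), \dots, \psi_k(a))$. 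Applying it again for each $\tilde{g}_i$ gives $\psi_i(a) = g_i(\varphi_1(a), \dots, \varphi_m(a))$. The two expressions then coincide term by term, so the identity holds.

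The only real care needed is bookkeeping. The intermediate objects $\psi_i$ must lie in $A$, that is, in $\mathcal{C}^{(n)}$, so that $\tilde{f}$ may legitimately be applied to them. This follows from closure of $\mathcal{C}$ under composition, as already noted before Definition~\ref{def:action_algebra}. The statement should be read in the full algebra of operations on $A$, where composition of the $\tilde{g}_i$ makes sense. No step is a genuine obstacle: the argument is a direct two-line unwinding of definitions, and it is essentially the same associativity of substitution already used in the proof of Theorem~\ref{thm:finitely_related_nary}. As a by-product, the same computation with $g_i$ a projection shows that $\tilde{\pi}^m_i$ is the corresponding projection on $A$. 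Hence $f \mapsto \tilde{f}$ is a clone homomorphism, and its image $\widetilde{\mathcal{C}}$ is itself a clone on $A$.
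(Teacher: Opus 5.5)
Your proof is correct and takes the same approach as the paper: evaluate both sides pointwise and unwind the definition of the induced action. Your bookkeeping is in fact more careful than the paper's. The paper evaluates both sides at a single $h \in \mathcal{C}^{(n)}$, as though the $\tilde{g}_i$ were unary, whereas you evaluate at an $m$-tuple $(\varphi_1, \dots, \varphi_m) \in A^m$ and then at a point $a \in C^n$, which is what $m$-ary $g_i$ require.
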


\begin{proof}
We evaluate both sides on an arbitrary $n$-ary operation $h \in \mathcal{C}^{(n)}$. By definition of clone action, the left side gives:
\[
\big(\widetilde{f(g_1, \dots, g_k)}\big)(h) = f(g_1, \dots, g_k)(h) = f\big(g_1(h), \dots, g_k(h)\big).
\]
For the right side, applying $\tilde{f}$ to $\tilde{g}_1(h), \dots, \tilde{g}_k(h)$ yields:
\[
\tilde{f}(\tilde{g}_1, \dots, \tilde{g}_k)(h) = f\big(\tilde{g}_1(h), \dots, \tilde{g}_k(h)\big) = f\big(g_1(h), \dots, g_k(h)\big).
\]
Thus, the two operations are identical.
\end{proof}

Theorem~\ref{thm:action_algebra_finitely_related} gives the following characterization.

\begin{theorem}
\label{thm:action_algebra_finitely_related}
For $n \ge |C|$, the clone $\mathcal{C}$ is finitely related if and only if its action algebra $\widetilde{\mathcal{C}} = \mathcal{C} \curvearrowright \mathcal{C}^{(n)}$ is finitely related.
\end{theorem}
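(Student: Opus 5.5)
We prove the two directions separately. Throughout, $A = \mathcal{O}^{(n)} = C^{C^n}$ and $B = \mathcal{C}^{(n)} \subseteq A$. We write $\widetilde{\mathcal{C}}$ for the clone on $B$ generated by, equivalently consisting of, the induced operations $\tilde f|_B$ for $f \in \mathcal{C}$. Lemma~\ref{lem:action_homomorphism} shows this set is indeed a clone: projections go to projections, and composition is respected. The hypothesis $n \ge |C|$ will be used in one place only. It guarantees that $f \mapsto \tilde f|_B$ is injective, and more strongly that $f$ can be recovered from $\tilde f|_B$. The reason is that $B$ contains the $n$ projections, and since $n \ge |C|$ there is a tuple $t\in C^n$ listing all elements of $C$. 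Hence every $s \in C^l$ is of the form $(\pi_{i_1}(t),\dots,\pi_{i_l}(t))$.

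\textbf{($\Rightarrow$).} Suppose $\mathcal{C} = \text{Pol}(R)$ for a single relation $R \subseteq C^r$, using Proposition~\ref{prop:single_relation}. We would exhibit finitely many relations on $B$ whose polymorphisms are exactly $\widetilde{\mathcal{C}}$. We take the restrictions $S_{t,u}\cap B^2$ of the relations from Lemma~\ref{lem:equality_relation}, together with relations transporting $R$ to $B$. A natural choice for the latter is
\[
R^{\sharp}_{t_1,\dots,t_r} = \{ g \in B : (g(t_1),\dots,g(t_r)) \in R\}
\]
for $t_1,\dots,t_r \in C^n$, or an $r$-ary version $\{(g_1,\dots,g_r)\in B^r : (g_1(t),\dots,g_r(t))\in R\}$. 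Given $F \in \text{Pol}$ of these relations, we define $f(s) := F(\pi_{i_1},\dots,\pi_{i_l})(t)$ with $t$ the enumerating tuple, exactly as in Lemma~\ref{lem:equality_relation}. The restricted relations $S_{t,u}$ are still available because $B$ contains the projections, so they give well-definedness and $F = \tilde f|_B$ as before. It remains to show $f \in \text{Pol}(R)$. Given tuples $r^1,\dots,r^l \in R$, we would like elements $g_j\in B$ with $g_j(t_i) = r^j_i$, and the $r$-ary transported relation at $t$ is designed for this. Using $(r^j_1,\dots,r^j_r)\in R$ pointwise, we apply $F$ to the $r$-tuples of constant-index projections $(\pi_{i(r^j_1)},\dots,\pi_{i(r^j_r)})$, which lie in the relation. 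Preservation then yields $(f(r^\cdot_1),\dots,f(r^\cdot_r)) \in R$. Conversely, every $\tilde f$ with $f\in\mathcal{C}$ preserves all these relations by the same pointwise computation as in Direction~1 of Lemma~\ref{lem:equality_relation}, since $B$ is closed under $\tilde f$.

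\textbf{($\Leftarrow$).} Suppose $\widetilde{\mathcal{C}} = \text{Pol}(T)$ for some relation $T \subseteq B^p$. We would pull $T$ back to a relation on $C$. We identify each $g\in B$ with its value table in $C^{C^n}$ and set
\[
T^\flat = \{ (g_1,\dots,g_p) \text{ as a } p|C|^n\text{-tuple} : (g_1,\dots,g_p)\in T\}.
\]
We claim $\mathcal{C} = \text{Pol}(T^\flat, \mathcal{C}^{(n)})$. The inclusion $\subseteq$ holds because $f\in\mathcal{C}$ acts coordinatewise, which is precisely the action $\tilde f$, and $\tilde f$ preserves $T$; moreover $f$ preserves $\mathcal{C}^{(n)}$ as in Theorem~\ref{thm:finitely_related_nary}. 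For $\supseteq$, take $f$ preserving both relations. Preserving $\mathcal{C}^{(n)}$ means $\tilde f$ maps $B$ into $B$, and preserving $T^\flat$ means $\tilde f|_B \in \text{Pol}(T) = \widetilde{\mathcal{C}}$. Hence $\tilde f|_B = \tilde g|_B$ for some $g\in\mathcal{C}$, and the injectivity observation, which uses $n\ge|C|$, gives $f=g\in\mathcal{C}$.

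The main obstacle is the ($\Rightarrow$) direction. There we must make sure that the chosen relations on $B$ both force $F$ to be induced, which needs enough projections in $B$, and encode membership of $f$ in $\text{Pol}(R)$. If the transported-relation encoding is awkward, the fallback is Theorem~\ref{thm:finitely_related_nary}: take $m\ge |R|$ and encode $\mathcal{C}^{(m)}$ via terms over $B$, using $n\ge |C|$ to simulate $m$-ary inputs.
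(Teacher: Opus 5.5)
Your argument is correct, but it takes a genuinely different route from the paper in both directions.

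The paper works with the canonical relations.
\begin{itemize}
\item For ($\Rightarrow$) it uses Theorem~\ref{thm:finitely_related_nary} to get $\mathcal{C}=\text{Pol}(\mathcal{C}^{(m)})$. It then proves $\widetilde{\mathcal{C}}=\text{Pol}(S,\widetilde{\mathcal{C}}^{(m)})$, where $\widetilde{\mathcal{C}}^{(m)}$ is the $|B|^m$-ary relation of induced $m$-ary actions.
\item For ($\Leftarrow$) it applies the quantitative Geiger theorem (Theorem~\ref{thm:geiger_quantitative}) to the action algebra to get $\text{Pol}(\widetilde{\mathcal{C}}^{(m)})\subseteq\widetilde{\mathcal{C}}$. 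It then pulls this back to $\mathcal{C}=\text{Pol}(\mathcal{C}^{(m)})$ using injectivity of $f\mapsto\tilde f$.
\end{itemize}

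You instead transport the given relation directly.
\begin{itemize}
\item Forward, $R$ becomes the $r$-ary relation $\{(g_1,\dots,g_r)\in B^r:(g_1(t),\dots,g_r(t))\in R\}$ at the enumerating tuple $t$, alongside $S_{t,u}\cap B^2$.
\item Backward, $T$ is flattened into value tables, and you show $\mathcal{C}=\text{Pol}(T^\flat,\mathcal{C}^{(n)})$ by a direct double inclusion.
\end{itemize}

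What your route buys:
\begin{itemize}
\item It needs no Galois-connection theorem at all.
\item It produces explicit relations of small arity.
\item It handles a point the paper passes over. Lemma~\ref{lem:equality_relation} is stated on $\mathcal{O}^{(n)}$, and its proof uses constant maps that need not lie in $\mathcal{C}^{(n)}$. Your restriction to $B$, with projections evaluated at an enumerating tuple, works inside $\mathcal{C}^{(n)}$.
\item It handles a second such point. In the paper's ($\Leftarrow$), one needs $f\in\text{Pol}(\mathcal{C}^{(m)})$ to map $\mathcal{C}^{(n)}$ into itself before $\tilde f$ is even an operation on the universe. Your explicit relation $\mathcal{C}^{(n)}$ guarantees this.
\end{itemize}
In return, the paper's approach exhibits a uniform correspondence between the canonical relations $\mathcal{C}^{(m)}$ and $\widetilde{\mathcal{C}}^{(m)}$.

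One cleanup: commit to the $r$-ary transported relation and drop the unary variant $R^\sharp_{t_1,\dots,t_r}$. The unary version would require a single $g_j\in B$ realizing the whole tuple $r^j$, which $B$ need not contain (e.g.\ when $|B|<|R|$). With the $r$-ary version, the fallback you mention is unnecessary.
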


\begin{proof}
$(\implies)$ Assume that the clone $\mathcal{C}$ is finitely related. By Theorem~\ref{thm:finitely_related_nary}, there exists an arity $m \ge 1$ such that $\mathcal{C} = \text{Pol}\left(\mathcal{C}^{(m)}\right)$.

Fix $n \ge |C|$, and let $A = \mathcal{C}^{(n)}$ be the universe of the action algebra $\widetilde{\mathcal{C}}$. We define $\widetilde{\mathcal{C}}^{(m)} \subseteq A^{A^m}$ as the relation formed by the induced actions of all $m$-ary operations in $\mathcal{C}$:
\[
\widetilde{\mathcal{C}}^{(m)} = \{ \tilde{g} : A^m \to A \mid g \in \mathcal{C}^{(m)} \}.
\]
We will show that the action algebra $\widetilde{\mathcal{C}}$ is finitely related by proving that $\widetilde{\mathcal{C}} = \text{Pol}(S, \widetilde{\mathcal{C}}^{(m)})$, where $S$ is the finite relation from Lemma~\ref{lem:equality_relation}. We prove this equality by showing inclusion in both directions.

First, we show $\widetilde{\mathcal{C}} \subseteq \text{Pol}(S, \widetilde{\mathcal{C}}^{(m)})$. Take an arbitrary operation $\tilde{f} \in \widetilde{\mathcal{C}}$. By definition, $\tilde{f}$ is the action induced by some $k$-ary operation $f \in \mathcal{C}$. By Lemma~\ref{lem:equality_relation}, $\tilde{f}$ preserves $S$. To see that $\tilde{f}$ preserves $\widetilde{\mathcal{C}}^{(m)}$, take $k$ arbitrary actions $\tilde{g}_1, \dots, \tilde{g}_k \in \widetilde{\mathcal{C}}^{(m)}$. By Lemma~\ref{lem:action_homomorphism}, $\tilde{f}(\tilde{g}_1, \dots, \tilde{g}_k) = \widetilde{f(g_1, \dots, g_k)}$. Since $f \in \mathcal{C}$ and $g_1, \dots, g_k \in \mathcal{C}^{(m)}$, their composition $f(g_1, \dots, g_k) \in \mathcal{C}^{(m)}$, so its induced action belongs to $\widetilde{\mathcal{C}}^{(m)}$. Thus, $\tilde{f} \in \text{Pol}(S, \widetilde{\mathcal{C}}^{(m)})$.

Conversely, we show $\text{Pol}(S, \widetilde{\mathcal{C}}^{(m)}) \subseteq \widetilde{\mathcal{C}}$. Let $F \in \text{Pol}(S, \widetilde{\mathcal{C}}^{(m)})$. Since $F$ preserves $S$, Lemma~\ref{lem:equality_relation} guarantees that $F = \tilde{f}$ for some $k$-ary operation $f \in \mathcal{O}$. To show $f \in \mathcal{C} = \text{Pol}\left(\mathcal{C}^{(m)}\right)$, take arbitrary operations $g_1, \dots, g_k \in \mathcal{C}^{(m)}$. Since $F = \tilde{f}$ preserves $\widetilde{\mathcal{C}}^{(m)}$, we have $\tilde{f}(\tilde{g}_1, \dots, \tilde{g}_k) \in \widetilde{\mathcal{C}}^{(m)}$. By Lemma~\ref{lem:action_homomorphism}, this means $\widetilde{f(g_1, \dots, g_k)} \in \widetilde{\mathcal{C}}^{(m)}$, which implies $f(g_1, \dots, g_k) \in \mathcal{C}^{(m)}$. Hence, $f$ preserves $\mathcal{C}^{(m)}$, so $f \in \mathcal{C}$ and $F = \tilde{f} \in \widetilde{\mathcal{C}}$.

This establishes $\widetilde{\mathcal{C}} = \text{Pol}(S, \widetilde{\mathcal{C}}^{(m)})$, so the action algebra $\widetilde{\mathcal{C}}$ is finitely related.

\medskip
$(\impliedby)$ Assume that the action algebra $\widetilde{\mathcal{C}} = \mathcal{C} \curvearrowright \mathcal{C}^{(n)}$ is finitely related. By Proposition~\ref{prop:single_relation}, there exists a single finite relation $R$ on the universe $A = \mathcal{C}^{(n)}$ such that $\widetilde{\mathcal{C}} = \text{Pol}(R)$.

Choose an arity $m \ge |R|$. (Note that this arity $m$ depends on $|R|$ and may be larger than the arity $m$ used in the forward direction). By Theorem~\ref{thm:geiger_quantitative} applied to the action algebra $\widetilde{\mathcal{C}}$, the relation $R$ is primitive positively definable from the relation $\widetilde{\mathcal{C}}^{(m)}$, that is, $R \in \text{PP}(\widetilde{\mathcal{C}}^{(m)})$. Consequently, any operation that preserves $\widetilde{\mathcal{C}}^{(m)}$ must also preserve $R$:
\[
\text{Pol}(\widetilde{\mathcal{C}}^{(m)}) \subseteq \text{Pol}(R) = \widetilde{\mathcal{C}}.
\]

Now, take an arbitrary $k$-ary operation $f \in \text{Pol}\left(\mathcal{C}^{(m)}\right)$. We examine its induced action $\tilde{f} : A^k \to A$. By Lemma~\ref{lem:action_homomorphism}, for any $g_1, \dots, g_k \in \mathcal{C}^{(m)}$, we have $\tilde{f}(\tilde{g}_1, \dots, \tilde{g}_k) = \widetilde{f(g_1, \dots, g_k)}$. Since $f \in \text{Pol}\left(\mathcal{C}^{(m)}\right)$, the composite operation $f(g_1, \dots, g_k) \in \mathcal{C}^{(m)}$, so its induced action belongs to $\widetilde{\mathcal{C}}^{(m)}$. Thus, $\tilde{f}$ preserves $\widetilde{\mathcal{C}}^{(m)}$.

Therefore, $\tilde{f} \in \text{Pol}(\widetilde{\mathcal{C}}^{(m)}) \subseteq \widetilde{\mathcal{C}}$.

Since $\tilde{f} \in \widetilde{\mathcal{C}}$, there exists some operation $h \in \mathcal{C}$ such that $\tilde{f} = \tilde{h}$. We will explicitly show that $f = h$. 

Since $n \ge |C|$, we can choose a tuple $a = (a_1, \dots, a_n) \in C^n$ that contains every element of $C$. Let $(x_1, \dots, x_k) \in C^k$ be an arbitrary input tuple. For each $i \in \{1, \dots, k\}$, because $x_i \in C$ and $a$ contains all elements of $C$, there is some index $j_i \in \{1, \dots, n\}$ such that $a_{j_i} = x_i$. Because $\mathcal{C}$ is a clone, it contains the $n$-ary projection operations. Let $g_i \in \mathcal{C}^{(n)}$ be the projection operation $\pi_{j_i}^n$. By definition, $g_i(a) = a_{j_i} = x_i$.

Because $\tilde{f} = \tilde{h}$, they must agree when evaluated on the $k$-tuple of operations $(g_1, \dots, g_k)$:
\[
\tilde{f}(g_1, \dots, g_k) = \tilde{h}(g_1, \dots, g_k).
\]
Evaluating both sides as $n$-ary operations on the tuple $a \in C^n$ gives:
\[
\tilde{f}(g_1, \dots, g_k)(a) = \tilde{h}(g_1, \dots, g_k)(a).
\]
By the definition of the induced action, this expands to:
\[
f(g_1(a), \dots, g_k(a)) = h(g_1(a), \dots, g_k(a)).
\]
Substituting $g_i(a) = x_i$, we obtain:
\[
f(x_1, \dots, x_k) = h(x_1, \dots, x_k).
\]
Since $(x_1, \dots, x_k)$ was arbitrary, $f = h$. Because $h \in \mathcal{C}$, this forces $f \in \mathcal{C}$.

Thus, $\text{Pol}\left(\mathcal{C}^{(m)}\right) \subseteq \mathcal{C}$, which implies $\mathcal{C} = \text{Pol}\left(\mathcal{C}^{(m)}\right)$. By Theorem~\ref{thm:finitely_related_nary}, the clone $\mathcal{C}$ is finitely related.
\end{proof}

\begin{corollary}
\label{cor:clone_iso_finitely_related}
Let $\mathcal{C}$ and $\mathcal{D}$ be two isomorphic clones on finite domains $C$ and $D$, respectively. Then $\mathcal{C}$ is finitely related if and only if $\mathcal{D}$ is finitely related.
\end{corollary}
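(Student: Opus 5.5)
Fix a clone isomorphism $\xi:\mathcal{C}\to\mathcal{D}$, that is, an arity-preserving bijection that sends projections to projections and commutes with composition:
\[
\xi(f(g_1,\dots,g_k))=\xi(f)(\xi(g_1),\dots,\xi(g_k)).
\]
The strategy is to reduce to Theorem~\ref{thm:action_algebra_finitely_related}. Choose $n\ge\max(|C|,|D|)$. By that theorem, $\mathcal{C}$ is finitely related iff $\mathcal{C}\curvearrowright\mathcal{C}^{(n)}$ is, and likewise for $\mathcal{D}$. So it suffices to show that the two action algebras $\widetilde{\mathcal{C}}$ and $\widetilde{\mathcal{D}}$ have the same finite-relatedness status. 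I would show this by showing that $\xi$ induces a bijection $\sigma:=\xi|_{\mathcal{C}^{(n)}}:\mathcal{C}^{(n)}\to\mathcal{D}^{(n)}$ between their universes which conjugates one clone of operations onto the other.

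\textbf{Step 1: $\sigma$ intertwines the actions.} For $f\in\mathcal{C}$ of arity $k$ and $g_1,\dots,g_k\in\mathcal{C}^{(n)}$, compatibility of $\xi$ with composition gives
\[
\sigma(\tilde f(g_1,\dots,g_k))=\widetilde{\xi(f)}(\sigma(g_1),\dots,\sigma(g_k)).
\]
Hence the transported operation $\sigma\circ\tilde f\circ(\sigma^{-1})^k$ equals $\widetilde{\xi(f)}$. Since $\xi$ is onto, this gives $\sigma\,\widetilde{\mathcal{C}}\,\sigma^{-1}=\widetilde{\mathcal{D}}$ as sets of operations. Note that $\widetilde{\mathcal{C}}$ is itself a clone on $\mathcal{C}^{(n)}$: it contains projections, since $\widetilde{\pi_i^k}$ is the $i$-th projection, and it is closed under composition by Lemma~\ref{lem:action_homomorphism}.

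\textbf{Step 2: finite relatedness transfers along the bijection.} This is a purely set-theoretic fact. If $\widetilde{\mathcal{C}}=\text{Pol}(R)$ for some $R\subseteq(\mathcal{C}^{(n)})^r$, then $\widetilde{\mathcal{D}}=\text{Pol}(\sigma(R))$, where $\sigma$ is applied coordinatewise. Indeed, an operation $G$ on $\mathcal{D}^{(n)}$ preserves $\sigma(R)$ iff $\sigma^{-1}G\sigma$ preserves $R$. Symmetrically, using $\sigma^{-1}$, finite relatedness of $\widetilde{\mathcal{D}}$ transfers back to $\widetilde{\mathcal{C}}$. Combining this with the reduction from Theorem~\ref{thm:action_algebra_finitely_related} gives the corollary.

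The main obstacle is the convention in Step 1. The action is defined by pointwise composition $g\mapsto f(g_1,\dots,g_k)$, which is exactly clone composition. So it is preserved by the abstract isomorphism $\xi$, even though $\xi$ knows nothing about the underlying domains $C$ and $D$. I need to check that only this structure, and not evaluation at points of $C$, enters the definition of $\widetilde{\mathcal{C}}$. Evaluation appears only inside the proof of Theorem~\ref{thm:action_algebra_finitely_related}, which I use as a black box, so the argument is safe. The only other thing to verify is that a single $n$ satisfies $n\ge|C|$ and $n\ge|D|$ simultaneously, which holds by the choice of $n$.
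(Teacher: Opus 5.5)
Your proposal is correct and follows the paper's proof. You restrict the clone isomorphism to a bijection $\mathcal{C}^{(n)}\to\mathcal{D}^{(n)}$ for $n\ge\max(|C|,|D|)$, check that it intertwines the two actions, and apply Theorem~\ref{thm:action_algebra_finitely_related} on both sides. Your Step 2, which transports $\text{Pol}(R)$ to $\text{Pol}(\sigma(R))$, spells out what the paper compresses into ``isomorphic as concrete algebras, hence finitely related simultaneously.''
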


\begin{proof}
Let $\Phi : \mathcal{C} \to \mathcal{D}$ be a clone isomorphism. Formally, $\Phi$ is a bijection that preserves arities, preserves projections:
\[
\Phi(\pi_i^n) = \pi_i^n \quad \text{for all } 1 \le i \le n \text{ and } n \ge 1,
\]
and respects generalized composition, that is, for all $f \in \mathcal{C}^{(k)}$ and $g_1, \dots, g_k \in \mathcal{C}^{(m)}$:
\[
\Phi\big(f(g_1, \dots, g_k)\big) = \Phi(f)\big(\Phi(g_1), \dots, \Phi(g_k)\big).
\]

Choose $n \ge \max(|C|, |D|)$. The isomorphism $\Phi$ restricts to a bijection between the $n$-ary parts $\mathcal{C}^{(n)} \to \mathcal{D}^{(n)}$ and respects the action operations. Consequently, the action algebras $\widetilde{\mathcal{C}} = \mathcal{C} \curvearrowright \mathcal{C}^{(n)}$ and $\widetilde{\mathcal{D}} = \mathcal{D} \curvearrowright \mathcal{D}^{(n)}$ are isomorphic as concrete action clones.

Because $\widetilde{\mathcal{C}} \cong \widetilde{\mathcal{D}}$ as concrete algebras, $\widetilde{\mathcal{C}}$ is finitely related if and only if $\widetilde{\mathcal{D}}$ is finitely related. By Theorem~\ref{thm:action_algebra_finitely_related}:
\begin{align*}
\mathcal{C} \text{ is finitely related} &\iff \widetilde{\mathcal{C}} \text{ is finitely related} \\
&\iff \widetilde{\mathcal{D}} \text{ is finitely related} \\
&\iff \mathcal{D} \text{ is finitely related}.
\end{align*}
Therefore, $\mathcal{C}$ is finitely related if and only if $\mathcal{D}$ is finitely related.
\end{proof}

\subsection{Application to Varieties and Free Algebras}

As an application of Corollary~\ref{cor:clone_iso_finitely_related}, we recover a known finite-relatedness criterion for free algebras (see Markovi\'c, Mar\'oti, and McKenzie \cite{markovic}). Using natural clone isomorphisms, we show that a finite algebra is finitely related if and only if its corresponding free algebra is finitely related. Before proving this, we must formally define identities, varieties, and term clones.

\begin{definition}[Identities and Satisfaction]
\label{def:identities_satisfaction}
Let $t(x_1, \dots, x_m)$ and $s(x_1, \dots, x_m)$ be terms of a given algebraic language. An \emph{identity} is a formal equation denoted by $t \approx s$. An algebra $\mathbf{B}$ is said to \emph{satisfy} this identity if, for every possible assignment of elements $b_1, \dots, b_m \in B$, the corresponding term operations evaluate to the same element in $B$, written $\mathbf{B} \models t \approx s$.
\end{definition}

\begin{definition}[Equational Theory and Variety]
\label{def:equational_theory_variety}
The \emph{equational theory} of an algebra $\mathbf{A}$, denoted $\Sigma_{\mathbf{A}}$, is the set of all identities satisfied by $\mathbf{A}$:
\[
\Sigma_{\mathbf{A}} = \{ t \approx s : \mathbf{A} \models t \approx s \}.
\]
The \emph{variety} generated by $\mathbf{A}$, denoted $\mathcal{V}(\mathbf{A})$, is the class of all algebras (of the same signature) that satisfy every identity in $\Sigma_{\mathbf{A}}$:
\[
\mathcal{V}(\mathbf{A}) = \{ \mathbf{B} : \mathbf{B} \models \Sigma_{\mathbf{A}} \}.
\]
\end{definition}

In universal algebra, varieties can be characterized purely through algebraic constructions rather than logical identities, a foundational result proven by Garrett Birkhoff in 1935. 

\begin{theorem}[Birkhoff's HSP Theorem \cite{birkhoff1935, burris1981}]
\label{thm:birkhoff_hsp}
Let $K$ be a class of algebras. We define three closure operators on $K$:
\begin{itemize}
    \item $\mathbf{H}(K)$: The class of all homomorphic images of algebras in $K$.
    \item $\mathbf{S}(K)$: The class of all subalgebras of algebras in $K$.
    \item $\mathbf{P}(K)$: The class of all arbitrary direct products of algebras in $K$.
\end{itemize}
Birkhoff's theorem states that a class of algebras forms a variety if and only if it is closed under homomorphic images, subalgebras, and direct products. Consequently, the variety generated by a single algebra $\mathbf{A}$ can be expressed algebraically as:
\[
\mathcal{V}(\mathbf{A}) = \mathbf{HSP}(\mathbf{A}).
\]
\end{theorem}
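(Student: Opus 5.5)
The plan is to split the statement into a soundness half (identities survive $\mathbf{H}$, $\mathbf{S}$, $\mathbf{P}$) and a completeness half (every model of the identities is built from $\mathbf{A}$ by these operations), with a relatively free algebra as the bridge. I prove $\mathcal{V}(\mathbf{A}) = \mathbf{HSP}(\mathbf{A})$ in detail and then indicate how the same argument gives the general characterization of varieties.

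\emph{Soundness.} First I verify that if every algebra in $K$ satisfies an identity $t \approx s$, then so does every member of $\mathbf{H}(K)$, $\mathbf{S}(K)$ and $\mathbf{P}(K)$.
\begin{itemize}
\item For subalgebras this is immediate, since term operations of a subalgebra are restrictions of those of the ambient algebra.
\item For products, term operations are computed coordinatewise, so $t$ and $s$ agree on a product whenever they agree in each factor.
\item For a surjective homomorphism $\varphi\colon \mathbf{B}\to\mathbf{B}'$, I use $\varphi(t^{\mathbf{B}}(\bar b)) = t^{\mathbf{B}'}(\varphi\bar b)$, proved by induction on terms. Every tuple of $\mathbf{B}'$ lifts to a tuple of $\mathbf{B}$, so the identity transfers.
\end{itemize}
Consequently every class of the form $\{\mathbf{B} : \mathbf{B}\models\Sigma\}$ is closed under $\mathbf{H},\mathbf{S},\mathbf{P}$, and in particular $\mathbf{HSP}(\mathbf{A}) \subseteq \mathcal{V}(\mathbf{A})$.

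\emph{Completeness.} For a set $X$ of variables, let $\mathbf{T}(X)$ be the term algebra. Consider the map sending a term $t$ to the function $A^X \to A$, $\bar a \mapsto t^{\mathbf{A}}(\bar a)$. Its image $\mathbf{F}(X)$ is the subalgebra of $\mathbf{A}^{A^X}$ generated by the coordinate projections, so $\mathbf{F}(X) \in \mathbf{SP}(\mathbf{A})$. The kernel of $\mathbf{T}(X)\to\mathbf{F}(X)$ is exactly $\{(t,s) : \mathbf{A}\models t\approx s\}$.

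Now take $\mathbf{B} \models \Sigma_{\mathbf{A}}$ and set $X = B$. Let $\psi\colon \mathbf{T}(B)\to\mathbf{B}$ be the evaluation homomorphism, which is surjective because each $b$ is the image of the variable $b$. I define $\mathbf{F}(B)\to\mathbf{B}$ by $t^{\mathbf{A}}\mapsto\psi(t)$.

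The key step, and the one I expect to need the most care, is that this map is well defined. If $t^{\mathbf{A}} = s^{\mathbf{A}}$ as functions on $A^B$, then $\mathbf{A}\models t\approx s$, since only finitely many variables occur and the remaining coordinates are dummy. Hence $t \approx s$ lies in $\Sigma_{\mathbf{A}}$, so $\mathbf{B}\models t\approx s$, and therefore $\psi(t)=\psi(s)$. Homomorphism and surjectivity are then inherited from $\psi$. This gives $\mathbf{B}\in\mathbf{HSP}(\mathbf{A})$ and hence $\mathcal{V}(\mathbf{A}) = \mathbf{HSP}(\mathbf{A})$.

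\emph{General characterization.} If a class is a variety, it is defined by some set of identities, so it is closed under $\mathbf{H},\mathbf{S},\mathbf{P}$ by soundness. Conversely, let $K$ be closed under $\mathbf{H},\mathbf{S},\mathbf{P}$. I rerun the construction with $\mathbf{A}$ replaced by a set of representatives of the $X$-generated members of $K$ up to isomorphism. Taking only representatives avoids proper-class products, and it suffices because every $X$-generated algebra has cardinality bounded in terms of $|X|$ and the signature.

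This yields $\mathbf{F}_K(X)\in\mathbf{SP}(K)\subseteq K$. Every $\mathbf{B}\models\mathrm{Id}(K)$ is then a homomorphic image of $\mathbf{F}_K(B)$, so $\mathbf{B}\in K$. Together with soundness, this shows $K=\{\mathbf{B}:\mathbf{B}\models \mathrm{Id}(K)\}$, so $K$ is a variety.
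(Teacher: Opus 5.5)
The paper does not prove this statement; it is quoted as a classical result with references to Birkhoff and to Burris--Sankappanavar. Your argument is correct. It is essentially the standard textbook proof from those sources: identities are preserved by $\mathbf{H}$, $\mathbf{S}$, $\mathbf{P}$, and every model of the identities is a homomorphic image of a relatively free algebra lying in $\mathbf{SP}(K)$.

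One genuine difference from how the statement is phrased: the statement derives $\mathcal{V}(\mathbf{A})=\mathbf{HSP}(\mathbf{A})$ as a \emph{consequence} of the general characterization. That deduction tacitly needs Tarski's lemma that $\mathbf{HSP}(K)$ is itself closed under $\mathbf{H}$, $\mathbf{S}$, $\mathbf{P}$, via $\mathbf{SH}\le\mathbf{HS}$, $\mathbf{PH}\le\mathbf{HP}$ and $\mathbf{PS}\le\mathbf{SP}$. Your direct proof for a single algebra realizes $\mathbf{F}(X)$ concretely inside $\mathbf{A}^{A^X}$ and so bypasses these operator inequalities. Incidentally, for $X=\{x_1,\dots,x_n\}$ it gives a concrete copy of the paper's $\mathbf{F}_n(\mathcal{V}(\mathbf{A}))$ with universe $\mathrm{Clo}(\mathbf{A})^{(n)}$.

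The one step you should spell out is in the general part. The cardinality bound only shows that your representatives form a set. What the well-definedness of $\mathbf{F}_K(B)\to\mathbf{B}$ really uses is that an identity true in every representative is true in all of $K$, so that the kernel of $\mathbf{T}(B)\to\mathbf{F}_K(B)$ lies inside $\mathrm{Id}(K)$. This holds because a failure of $t\approx s$ in some $\mathbf{D}\in K$ already occurs in the subalgebra generated by the values of the finitely many variables of $t,s$. That subalgebra is in $K$ by $\mathbf{S}$-closure, is $X$-generated, and hence is isomorphic to a representative (isomorphic copies are included in $\mathbf{H}$-closure).

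Finally, for $K=\emptyset$ your $\mathbf{F}_K(X)$ is an empty product, so you rely on the convention that $\mathbf{P}$ includes it. Without that convention the empty class is closed under $\mathbf{H}$, $\mathbf{S}$, $\mathbf{P}$ but is not equational.
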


\begin{definition}[Free Algebra]
\label{def:free_algebra}
Fix a variety $\mathcal{V} = \mathcal{V}(\mathbf{A})$. The free algebra $\mathbf{F}_n(\mathcal{V})$ in $n$ generators over the variety $\mathcal{V} = \mathcal{V}(\mathbf{A})$ is constructed as the quotient of the set of all $n$-ary terms over the equivalence relation $\sim$, where two terms are equivalent if the algebra $\mathbf{A}$ satisfies the identity:
\[
t \sim s \iff \mathbf{A} \models t \approx s.
\]

The operations on $\mathbf{F}_n(\mathcal{V})$ are induced naturally by term formation. Equivalently, $\mathbf{F}_n(\mathcal{V})$ is uniquely characterized by its universal mapping property: for every algebra $\mathbf{B} \in \mathcal{V}$ and every map $\phi : \{x_1, \dots, x_n\} \to B$, there exists a unique homomorphism $\hat{\phi} : \mathbf{F}_n(\mathcal{V}) \to \mathbf{B}$ that extends $\phi$.
\end{definition}

\begin{definition}[Term Clone]
\label{def:term_clone}
For any algebra $\mathbf{B}$, the \emph{term clone}, denoted $\text{Clo}(\mathbf{B})$, is the set consisting of all term operations on $\mathbf{B}$.
\end{definition}

\begin{remark}
\label{rem:term_clone_is_clone}
The set $\text{Clo}(\mathbf{B})$ indeed forms a clone on the universe $B$:
\begin{itemize}
    \item It contains all projections $\pi_i^n$, as they correspond to the simple variable terms $x_i$.
    \item It is closed under composition, because replacing variables in a term $t(x_1, \dots, x_m)$ with other terms $g_1, \dots, g_m$ yields another valid term
    \[
    t(g_1, \dots, g_m).
    \]
\end{itemize}
\end{remark}

With these structures defined, we can establish a natural isomorphism between the term clone of an algebra and the term clone of its corresponding free algebra.

\begin{proposition}
\label{prop:term_clone_iso}
For $n \ge |A|$, there is a natural clone isomorphism between the term clone of a finite algebra $\mathbf{A}$ and the term clone of its free algebra $\mathbf{F}_n(\mathcal{V}(\mathbf{A}))$. That is:
\[
\text{Clo}(\mathbf{A}) \cong \text{Clo}\big(\mathbf{F}_n(\mathcal{V}(\mathbf{A}))\big).
\]
\end{proposition}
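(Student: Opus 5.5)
The natural map to use is the one sending a term to its interpretation in each algebra. Write $\mathbf{F} = \mathbf{F}_n(\mathcal{V}(\mathbf{A}))$. For a $k$-ary term $t$, let $t^{\mathbf{A}}$ and $t^{\mathbf{F}}$ be its term operations on $A$ and $F$. Define
\[
\Phi : \text{Clo}(\mathbf{A}) \to \text{Clo}(\mathbf{F}), \qquad \Phi(t^{\mathbf{A}}) := t^{\mathbf{F}}.
\]
Every element of $\text{Clo}(\mathbf{A})$ has the form $t^{\mathbf{A}}$, so the argument reduces to one equivalence: for $k$-ary terms $t, s$,
\[
t^{\mathbf{A}} = s^{\mathbf{A}} \iff t^{\mathbf{F}} = s^{\mathbf{F}}.
\]
The forward implication gives well-definedness of $\Phi$, and the backward implication gives injectivity. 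Surjectivity is immediate, since every term operation of $\mathbf{F}$ is some $t^{\mathbf{F}}$.

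For ($\Rightarrow$), suppose $t^{\mathbf{A}} = s^{\mathbf{A}}$. Then $\mathbf{A} \models t \approx s$, so the identity lies in $\Sigma_{\mathbf{A}}$. Since $\mathbf{F} \in \mathcal{V}(\mathbf{A})$, the algebra $\mathbf{F}$ satisfies every identity in $\Sigma_{\mathbf{A}}$. To confirm $\mathbf{F} \in \mathcal{V}(\mathbf{A})$, I would use Theorem~\ref{thm:birkhoff_hsp}: $\mathbf{F}$ embeds into $\mathbf{A}^{A^n}$ via $[u] \mapsto u^{\mathbf{A}}$, which is injective by the definition of $\sim$. Hence $t^{\mathbf{F}} = s^{\mathbf{F}}$.

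The backward implication ($\Leftarrow$) is where $n \ge |A|$ is needed, and I expect it to be the main obstacle. Suppose $t^{\mathbf{F}} = s^{\mathbf{F}}$, and fix $(a_1, \dots, a_k) \in A^k$. Because $n \ge |A|$, there is a map $\phi : \{x_1, \dots, x_n\} \to A$ that is onto $A$. Pick indices $j_i$ with $\phi(x_{j_i}) = a_i$. By the universal mapping property in Definition~\ref{def:free_algebra}, $\phi$ extends to a homomorphism $\hat{\phi} : \mathbf{F} \to \mathbf{A}$. Evaluating the equal operations $t^{\mathbf{F}}$ and $s^{\mathbf{F}}$ at the generators $([x_{j_1}], \dots, [x_{j_k}])$ and applying $\hat{\phi}$ gives
\[
t^{\mathbf{A}}(a_1, \dots, a_k) = s^{\mathbf{A}}(a_1, \dots, a_k).
\]
This is the same device used in the proof of Theorem~\ref{thm:action_algebra_finitely_related}, where a tuple containing all elements of $C$ was chosen; here the generators play the role of that tuple. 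An alternative route avoids the homomorphism: compare $t^{\mathbf{F}}([x_{j_1}], \dots, [x_{j_k}]) = [t(x_{j_1}, \dots, x_{j_k})]$ directly with the definition of $\sim$. The small point to check is that equality in $\mathbf{F}$ of these classes yields $t^{\mathbf{A}} = s^{\mathbf{A}}$ as $k$-ary operations even when $k > n$. This holds because the substituted variables already realize all of $A$ and repeated variables are allowed.

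Finally, I would check that $\Phi$ is a clone homomorphism. It preserves arities. It sends projections to projections, because the projection $\pi_i^k$ is the interpretation of the variable term $x_i$ in both algebras. It respects composition, because composing term operations corresponds to substituting terms into terms, which commutes with interpretation in any algebra (Remark~\ref{rem:term_clone_is_clone}). Hence $\Phi$ is a clone isomorphism. Combined with Corollary~\ref{cor:clone_iso_finitely_related}, this yields the intended application to free algebras.
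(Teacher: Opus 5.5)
Your proposal is correct and follows essentially the same route as the paper. Both use the term-to-term map $t^{\mathbf{A}} \leftrightarrow t^{\mathbf{F}}$ and reduce well-definedness and injectivity to the equivalence $t^{\mathbf{A}} = s^{\mathbf{A}} \iff t^{\mathbf{F}} = s^{\mathbf{F}}$, handling the harder direction via the surjective homomorphism $\mathbf{F}_n(\mathcal{V}(\mathbf{A})) \to \mathbf{A}$ that $n \ge |A|$ provides. The only difference is that you split the paper's appeal to Birkhoff's theorem (that $\mathbf{F}_n$ and $\mathbf{A}$ generate the same variety) into its two halves, and in doing so you make explicit the fact $\mathbf{F} \in \mathbf{SP}(\mathbf{A})$, via the embedding into $\mathbf{A}^{A^n}$, which the paper uses only implicitly.
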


\begin{proof}
We compare the term operations induced by the same term on the two algebras. Define $\Phi : \text{Clo}\big(\mathbf{F}_n(\mathcal{V}(\mathbf{A}))\big) \to \text{Clo}(\mathbf{A})$ by setting $\Phi(t^{\mathbf{F}_n}) = t^{\mathbf{A}}$ for any term $t$.

Since $n \ge |A|$, we can choose $n$ free generators and map them surjectively onto the elements of $A$. By the universal mapping property, this extends to a surjective homomorphism $\mathbf{F}_n(\mathcal{V}(\mathbf{A})) \twoheadrightarrow \mathbf{A}$. Therefore, $\mathbf{A}$ is a homomorphic image of the free algebra, meaning $\mathbf{A} \in \mathbf{H}(\mathbf{F}_n(\mathcal{V}(\mathbf{A})))$. By Birkhoff's HSP Theorem, this implies that the varieties generated by both algebras are identical:
\[
\mathcal{V}(\mathbf{F}_n(\mathcal{V}(\mathbf{A}))) = \mathcal{V}(\mathbf{A}).
\]
Consequently, two terms induce the same operation if and only if the corresponding identity holds in the variety:
\[
t^{\mathbf{F}_n} = s^{\mathbf{F}_n} \iff \mathbf{F}_n(\mathcal{V}(\mathbf{A})) \models t \approx s \iff \mathbf{A} \models t \approx s \iff t^{\mathbf{A}} = s^{\mathbf{A}}.
\]
This logical equivalence simultaneously proves that $\Phi$ is well-defined (the forward direction) and injective (the backward direction).

By definition, every term operation on $\mathbf{A}$ is induced by some term, meaning $\Phi$ is trivially surjective. Finally, because the composition of term operations is again an operation induced by the composition of terms, the mapping $\Phi$ preserves both projections and composition.

Therefore, $\Phi$ is a bijective mapping that preserves the clone structure, making it a clone isomorphism. 
\end{proof}

\begin{theorem}
Let $\mathbf{A}$ be a finite algebra, let $\mathcal{V} = \mathcal{V}(\mathbf{A})$ denote the variety generated by $\mathbf{A}$, and let $\mathbf{F}_n(\mathcal{V})$ be the free algebra of rank $n$ in $\mathcal{V}$. Then, for every $n \ge |A|$,
\[
\mathbf{A} \text{ is finitely related} \iff \mathbf{F}_n(\mathcal{V}) \text{ is finitely related}.
\]
\end{theorem}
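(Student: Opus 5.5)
The plan is to reduce the statement directly to Corollary~\ref{cor:clone_iso_finitely_related} via Proposition~\ref{prop:term_clone_iso}. First I fix the meaning: a finite algebra $\mathbf{B}$ is \emph{finitely related} exactly when its term clone $\text{Clo}(\mathbf{B})$ is a finitely related clone in the sense of Definition~\ref{def:finitely_related_clone}. So the statement to be shown is
\[
\text{Clo}(\mathbf{A}) \text{ finitely related} \iff \text{Clo}\big(\mathbf{F}_n(\mathcal{V})\big) \text{ finitely related}.
\]

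The next step is to check that both clones live on finite domains, since Corollary~\ref{cor:clone_iso_finitely_related} (through Theorem~\ref{thm:action_algebra_finitely_related} and Theorem~\ref{thm:geiger_quantitative}) needs finiteness. For $\mathbf{A}$ this is given. For $\mathbf{F}_n(\mathcal{V})$ I would argue as follows. Two $n$-ary terms are identified precisely when they induce the same $n$-ary term operation on $\mathbf{A}$. So $\mathbf{F}_n(\mathcal{V})$ is in bijection with $\text{Clo}^{(n)}(\mathbf{A}) \subseteq A^{A^n}$, which has at most $|A|^{|A|^n}$ elements. Equivalently, $\mathbf{F}_n(\mathcal{V})$ embeds into $\mathbf{A}^{A^n}$ via $t \mapsto t^{\mathbf{A}}$. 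This identification also shows that $\mathbf{F}_n(\mathcal{V})$ is, up to isomorphism, the action algebra $\text{Clo}(\mathbf{A}) \curvearrowright \text{Clo}(\mathbf{A})^{(n)}$. That observation could serve as an alternative route through Theorem~\ref{thm:action_algebra_finitely_related} directly, but I will not need it.

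With finiteness in hand, Proposition~\ref{prop:term_clone_iso} supplies, for $n \ge |A|$, a clone isomorphism $\text{Clo}(\mathbf{A}) \cong \text{Clo}(\mathbf{F}_n(\mathcal{V}))$. Applying Corollary~\ref{cor:clone_iso_finitely_related} with $\mathcal{C} = \text{Clo}(\mathbf{A})$ on $C = A$ and $\mathcal{D} = \text{Clo}(\mathbf{F}_n(\mathcal{V}))$ on $D = F_n(\mathcal{V})$ then gives the equivalence immediately. Note that the corollary internally chooses an arity of at least $\max(|C|,|D|)$; this is unrelated to the rank $n$ of the free algebra, so the hypothesis $n \ge |A|$ is used only to invoke Proposition~\ref{prop:term_clone_iso}.

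The main obstacle is the finiteness of $\mathbf{F}_n(\mathcal{V})$. The free algebra is defined abstractly as a quotient of the term algebra, so a careless reading might let it be infinite, and then the corollary would not apply. I would handle this explicitly through the embedding into $\mathbf{A}^{A^n}$ above.

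A secondary point is that Corollary~\ref{cor:clone_iso_finitely_related} asserts the action algebras are isomorphic ``as concrete algebras''. In this application, that isomorphism comes from a bijection between universes that carries each induced operation $\tilde f$ to $\widetilde{\Phi(f)}$, and finite relatedness transfers along such a bijection by transporting the relation $R$. I would state this transfer briefly, so that the argument does not depend on anything beyond bijective renaming.
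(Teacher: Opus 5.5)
Your proposal is correct and is essentially the paper's proof: you combine the clone isomorphism of Proposition~\ref{prop:term_clone_iso} with Corollary~\ref{cor:clone_iso_finitely_related}, reading ``finitely related algebra'' as ``finitely related term clone,'' exactly as the paper does. Your explicit check that $\mathbf{F}_n(\mathcal{V})$ is finite, via the injection $t \mapsto t^{\mathbf{A}}$ into $A^{A^n}$, usefully supplies the finite-domain hypothesis of the corollary, which the paper's proof uses without comment.
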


\begin{proof}
Recall Corollary~\ref{cor:clone_iso_finitely_related}: if two finite clones are isomorphic, then either both are finitely related or neither is. 

Applying this corollary to the natural clone isomorphism established in Proposition~\ref{prop:term_clone_iso}:
\[
\text{Clo}(\mathbf{A}) \cong \text{Clo}\big(\mathbf{F}_n(\mathcal{V})\big),
\]
we immediately obtain:
\[
\text{Clo}(\mathbf{A}) \text{ is finitely related} \iff \text{Clo}\big(\mathbf{F}_n(\mathcal{V})\big) \text{ is finitely related}.
\]
By definition, an algebra is finitely related precisely when its corresponding term clone is finitely related. Substituting the algebras for their term clones yields the final result:
\[
\mathbf{A} \text{ is finitely related} \iff \mathbf{F}_n(\mathcal{V}) \text{ is finitely related}. \qedhere
\]
\end{proof}

\section*{Acknowledgements}

The author would like to express sincere gratitude to supervisor Prof.~Michael Pinsker for inviting him to participate in research work within the FB1 Algebra Group at TU Wien.

A special thank you goes to scientific advisor Dr.~Žaneta Genčiová for her continuous support, thorough review, and steady guidance. The author also thanks all members of the FB1 Algebra Group for their hospitality during his stay in Vienna.

This summer project has been funded by the European Research Council (Project POCOCOP, ERC Synergy Grant 101071674). Views and opinions expressed are however those of the author only and do not necessarily reflect those of the European Union or the European Research Council Executive Agency. Neither the European Union nor the granting authority can be held responsible for them.


\end{document}